\newtheorem{theorem}{Theorem}[section]
\theoremstyle{definition}
\newtheorem{definition}[theorem]{Definition}
\newtheorem{proposition}{Proposition}
\theoremstyle{remark}
\numberwithin{equation}{section}
\newcommand{\lp}{\dashv}
\newcommand{\rp}{\vdash}
\DeclareMathOperator{\wt}{wt}
\begin{document}

\title{A Normal Form for HNN-extension of dialgebras}




\author{Chia Zargeh}
\address{Department of Mathematics and Computer Science, Modern College of Business \& Science, Muscat, Sultanate of Oman}

\email{Chia.Zargeh@mcbs.edu.om}

\subjclass[2020]{16S15, 17A36, 17A61}

\date{}

\keywords{Dialgebra, Groebner-Shirshov basis, Composition-Diamond Lemma, HNN-extension}

\begin{abstract}
We consider a new version of Composition-Diamond Lemma for dialgebras in order to obtain an explicit Groebner-Shirshov basis for HNN-extension of dialgebras and determine a normal form for that.
\end{abstract}

\maketitle

\specialsection*{Introduction}
 
 Associative di-algebras or dialgebras were introduced by Loday \cite{10} and defined as ${K}$-vector spaces equipped with two associative  ${K}$-linear products $\dashv, \vdash \colon D \times D \to D$, called respectively,  the left product and the right product, which satisfy the associativity laws:
    \[ x \dashv (y \vdash z)= x \dashv (y \dashv z),\]
    \[ (x \dashv y) \vdash z= x \vdash (y \vdash z),\]
    \[ x \vdash (y \dashv z)= (x \vdash y) \dashv z,\]
for all $x,y,z \in D$. Dialgebras are closely connected to the notion of Leibniz algebras in the same way as the associative algebras are connected to Lie algebras. Indeed, Loday in \cite{11} showed that any dialgebra $(D, \dashv, \vdash)$ becomes a Leibniz algebra $D_{Leib}$ under the Leibniz bracket $[x, y] := x \dashv y - y \vdash x$ and the universal enveloping algebra of a Leibniz algebra has the structure of a dialgebra. Interesting properties of dialgebras allow to extend classical results. For instance, Bremner \textit{et al.} in \cite{4} obtained a new variety of nonassociative triple systems and provided a generalized statement of the BSO algorithm called Jordan triple disystems. Also, the concept of digroups as a generalization of continuous groups and dialgebra digroup have been studied by Salazar-Díaz \textit{et al.}~ \cite{12}. Some combinatorial studies of dialgebras can be found in \cite{8}, \cite{18} and \cite{19} as well. 

The concept of HNN-extension is an important construction in combinatorial group theory and it was originally introduced by Higman, Neumann and Neumann in \cite{6} stating that if $A_1$ and $A_2$ are isomorphic subgroups of a group $S$, then it is possible to find a group $H$ containing $S$ such that $A_1$ and $A_2$ are conjugate to each other in $H$ and $S$ is embeddable in $H$. The concept of HNN-extension was constructed for Lie algebras in independent works by Lichtman and Shirvani \cite{9} and Wasserman \cite{16}, and it has recently been spread to generalized versions of Lie algebras, namely, Leibniz algebras, Lie superalgebras, Hom-setting of Lie algebras in \cite{7} and \cite{7}, \cite{8} and \cite{15}, respectively. The HNN-extension of dialgebras was firstly introduced in \cite{8} as an effective approach for the construction of HNN-extensions of Leibniz algebras. Ladra \textit{et al.} in \cite{8} used Shirshov's algorithm \cite{13,14} in order to show that every dialgebra embeds inside its HNN-extension and then proved analogues embedding theorem for the case of Leibniz algebras. In this note, we intend to obtain an explicit Groebner-Shirshov basis for the HNN-extension of dialgberas by using a new version of Composition-Diamond Lemma. 
The Composition-Diamond Lemma (CD-Lemma, for short) is an essential concept in combinatorial algebra and the key ingredient of Groebner-Shirshov bases theory. It is used to solve various problems such as normal form, word problem, extensions and embedding theorems. The theory of Groebner-Shirshov bases is the parallel theory to Groebner bases \cite{5} introduced for ideals of free (commutative, anti-commutative) nonassociative algebras, free Lie algebras and simplicitly free associative algebras by Shirshov (see \cite{1}, \cite{13}), and it has been actively developed to different algebraic structure since two decades ago. The first version of Groebner-Shirshov bases theory for associative dialgebras was introduced by Bokut \textit{et al.}, in \cite{3}, and they gave Groebner-Shirshov bases for the universal enveloping algebra of a Leibniz algebra, the bar extension of a dialgebra, the free product of two dialgebras and Clifford dialgebra. The new version of Composition-Diamond Lemma was introduced by Zhang and Chen in \cite{17} based on an arbitrary \emph{monomial-center ordering}. Zhang and Chen compared their version with the corresponding results in Bokut's paper \cite{3} and showed that it is useful and convenient in calculation of Groebner-Shirshov bases for free dialgebras. Moreover, they provided a method to find normal forms of elements of an arbitrary disemigroup. Interested reader in Groebner-Shirshov bases and applications is encouraged to study the recently published book by Bokut \textit{et al.}, \cite{2}. 
\\The remainder of this note is organized as follows. In the first section, we recall the new version of CD-Lemma. In the second section, we construct HNN-extension of dialgebras and employ CD-Lemma in order to obtain an explicit Groebner-Shirshov basis and normal form for that. 
\section{New CD-Lemma for dialgebras}\label{newversion}
Let $D\langle X \rangle$ be the free dialgebra over a field $K$ generated by a well-ordered set $X$ and $X^{+}$ the free semigroup generated by $X$ without the unit. For any $u = x_1 \dots x_m \dots x_n \in X^{+}$ with $x_i \in X$, a \emph{normal diword} is written as \[[u]_m = x_1 \dots x_{m-1} \Dot{x_m} x_{m+1} \dots x_n = x_1 \vdash \dots \vdash x_{m-1} \vdash x_{m} \dashv x_{m+1} \dashv \dots \dashv x_{n}.\]
Write \[[X^{+}]_{\omega}=\{[u]_{m} \mid u \in X^{+}, m \in \mathbb{Z}^{+}, 1 \leq m \leq |u|\},\] the set of all normal diwords on $X$ which is a linear basis of free dialgebra $D\langle X \rangle$, and $|u|$ is the number of letters in $u$. Therefore, any polynomial $f \in D \langle X \rangle$ has the form
\[ f = \alpha[\bar{f}]_{n} + \sum_{[u]_{m} \in [X^{+}]} \alpha_i [u_i]_{m}, \]
where $[\bar{f}]_{n}$ and $[u_i]_{m}$ are normal diwords in $X$, $[\bar{f}]_{n} > [u_i]_{m}$. Polynomial $f$ is called left (right) normed if the position of the center letter is in the most right(left) side of the normal form.
For any $h=[u]_{m} \in [X^{+}]$, we call $u$ the associative word of $h$, and $m$, the position of center of $h$, is denoted by $p(h)$. For example, if $u \colon= x_{1}x_{2} \dots x_{n} \in X^{+}$, $x_{t} \in X$, $h=[u]_{m}$, $ 1\leq m \leq n$, then $p(h)=m$, and with the notation as in \cite{3},
$ [u]_{m} \colon= x_{1} \vdash \dots \vdash x_{{m-1}} \vdash x_{m} \dashv x_{m+1} \dashv \dots \dashv x_{n}$.


Let $X$ be a totally ordered set. The monomial ordering according to the Bokut \textit{et al.}'s approach (lexicographic-weight) $<$ on normal diwords is defined as follows.
\[
 [u] < [v]  \Leftrightarrow \wt([u]) <_{\texttt{lex}} \wt([v])\ (\text{lexicographically}),
\]
where $\wt([u])=(n+m+1, m, x_{-m},\dots,x_0,\dots,x_n)$ with $[u]=x_{-m} \cdots \dot{x}_0 \cdots x_n$.
 Let $S \subset D \langle X \rangle $ be a monic subset of polynomials such that $Id(S)$ is the ideal of $D\langle X \rangle$ generated by $S$. An $S$-diword is a diword in $X \cup S$ with only one occurrence of $s \in S$. 
 A normal diword $[u]_{m}$ is said to be $S$-irreducible if $[u]_m$ is not equal to the leading monomial of any normal $S$-diword. Let $Irr(S)$ be the set of all $S$-irreducible diwords. Consider the following statements:
\begin{itemize}
    \item[(i)] $S$ is a Groebner-Shirshov basis in $D\langle X \rangle$.
    \item[(ii)]$Irr(S)$ is a $k$-basis of $D \langle X | S \rangle = D\langle X \rangle / Id(S) $.
\end{itemize}
It is shown in \cite{3} that $(i) \Rightarrow (ii)$ but $(ii) \not \Rightarrow (i)$. The main difference between the above result with the new version of Composition-Diamond Lemma in \cite{12} is the ordering defined on $[X^{+}]$. In fact, Bokut et al., in \cite{3} considered a fixed ordering and special definition of composition trivial modulo $S$, whereas Zhang et al., in \cite{12} introduced \emph{monomial-center ordering} on $[X^{+}]$ which makes the two above conditions equivalent. In the sequel, we recall the new version of Composition-Diamond Lemma in conformity with \cite{12}. 

\begin{definition}
Let $>$ be a deg-lex ordering on $X^{+}$. The deg-lex-center ordering $>_{d}$ on $[X^{+}]_{\omega}$ is defined as follows. For any $[u]_{m} , [v]_{n} \in [X^{+}]_{\omega}$,
\[[u]_{m} >_{d} [v]_{n} ~ \text{if} ~ (u,m) > (v,n) ~ \text{lexicographically.} \] 
\end{definition}
For any nonzero polynomial $f \in D \langle X \rangle $, let us denote by $\bar{f}$ the leading monomial of $f$ with respect
to the ordering $>$, $lt(f)$ the leading term of $f$, $lc(f)$ the coefficient of $\bar{f}$ and $\tilde{f}$ the
associative word of $\bar{f}$. Polynomial $f$ is called monic if $lc(f) = 1$. A nonempty subset $S$ of $D\langle X \rangle $ is called monic if $s$ is monic for all $s \in S$.
\begin{definition}
A nonzero polynomial $f \in D \langle X \rangle$ is strong if $\tilde{f} > \tilde{r_{f}}$, where $r_{f}\colon= f-lt(f)$.
\end{definition}
An $S$-diword $g$ is a normal diword on $X \cup S$ with only one occurance of $s \in S$. If this is the case and 
\[ g=[x_{i_{1}} \dots x_{i_{k}} \dots x_{i_{n}}]_{m} {{\mid}_{{x_{i_{k}}} \mapsto s} },\]
where $ 1 \leq k \leq n$, $x_{i_{l}} \in X$, $1 \leq j \leq n$, then we also call $g$ an $s$-diword. 
\begin{definition}
An $S$-diword is called a normal $S$-diword if either $k=m$ or $s$ is strong.
\end{definition}
Let $(asb)$ be a normal $S$-diword. Then $\overline{(asb)}=[a\tilde{s}b]_{l}$ for some $l \in P([asb])$, where
\begin{align*}
    P([asb]) \colon= &\{ n \in \mathbb{Z}^{+} \mid 1 \leq n \leq |a| \} \cup \{|a|+ p(\bar{s}) \} \\ &\cup \{ n \in \mathbb{Z}^{+} \mid |a \tilde{s}| < n \leq |a\tilde{s}b|\},
\end{align*} if $s$ is strong, otherwise, 
\begin{align*}
    P([asb]) \colon= \{ |a| + p(\bar{s})\}.
\end{align*}
In the following, we recall the available compositions between monic polynomials in $D \langle X \rangle$. 
\begin{definition}
Let $f$ and $g$ be monic polynomials in $D \langle X \rangle$. 
\begin{itemize}
    \item [(i)] If $f$ is not strong, then $x \dashv f$ is called the composition of left multiplication of $f$ for all $x \in X$ and $f \vdash [u]_{|u|}$ is called the composition of right multiplication of $f$ for all $u \in X^{+}$.
    \item[(ii)] Suppose that $w=\tilde{f}=a\tilde{g}b$ for some $a,b \in X^{\ast}$ and $(agb)$ is a normal $g$-diword.
    \begin{itemize}
        \item [(a)] If $p(\bar{f}) \in P([agb])$, then the composition of inclusion of $f$ and $g$ is defined as
        \[(f,g)_{\bar{f}}=f-[agb]_{p(\bar{f})}.\]
        \item[(b)] If $p(\bar{f}) \notin P([agb])$ and both $f$ and $g$ are strong, then for any $x \in X$ the composition of left multiplication inclusion is defined as 
        \[  (f,g)_{[xw]_{1}}=[xf]_{1} - [xagb]_{1} \] 
        and 
        \[(f,g)_{[wx]_{|wx|}}=[fx]_{|wx|} - [agbx]_{|wx|}\]
        is called the right multiplicative inclusion of $f$ and $g$.
    \end{itemize}
    \item[(iii)] Suppose that there exists $w=\tilde{f}b=a\tilde{g}$ for some $a,b \in X^{\ast}$ such that $|{\tilde{f}}|+|\tilde{g}|>|w|$, $(fb)$ is a normal $f$-diword and $(ag)$ is a normal $g$-diword.
    \begin{itemize}
        \item[(a)] If $P([fb]) \cap P([ag]) \neq 0$, then for any $m \in P([fb])\cap P([ag])$ we call 
        \[ (f,g)_{[w]_{m}}=[fb]_{m} - [ag]_{m} \]
        the composition of intersection of $f$ and $g$.
        \item[(b)] If $P([fb]) \cap P([ag]) = 0$ and both $f$ and $g$ are strong, then for any $x \in X$ we call 
        \[ (f,g)_{[xw]_{1}}=[xfb]_{1} - [xag]_{1} \]
        the composition of left multiplicative intersection of $f$ and $g$, and 
        \[ (f,g)_{[wx]_{|wx|}} = [fbx]_{|wx|} - [agx]_{|wx|} \]
        the composition of right multiplicative intersection of $f$ and $g$. 
    \end{itemize}
\end{itemize}
\end{definition}
\subsection*{Triviality criteria}\label{triviality}
Let $S$ be a monic subset of $D \langle X \rangle$. A polynomial $h \in D \langle X \rangle$ is trivial modulo $S$ and denoted by 
\[ h  \equiv 0 \mod (S) \]
if $h=\sum \alpha_{i} [a_{i}s_{i}b_{i}]_{m_{i}}$, where $\alpha_{i} \in K$, $a_i,b_i \in X^{\ast}$, $s_{i} \in S$ and $\overline{[a_is_ib_i]_{m_i}} \leq \bar{h}$.
\\A monic set $S$ is called \emph{Groebner-Shirshov basis} in $D \langle X \rangle$ if any composition of polynomials in $S$ is trivial modulo $S$. The next theorem is the new version of Composition-Diamond Lemma (CD-Lemma) for the case of dialgebras with respect to monomial-center ordering and new triviality criteria.
\begin{theorem}\label{CDlemma}\cite{12}
Let $S$ be a monic subset of $D \langle X \rangle$, $>$ a deg-lex-center ordering on $[X^{+}]$ and $Id(S)$ the ideal of  $D \langle X \rangle$ generated by $S$. Then the following statements are equivalent:
\begin{itemize}
    \item[(i)] $S$ is a Groebner-Shirshov basis in $D \langle X \rangle$.
    \item[(ii)] $f \in Id(S)$ implies $\bar{f}=\overline{[asb]}_{{m}}$ for some normal $S$-diword $[asb]_{m}.$
    \item[(iii)] $Irr(S)=\{[u]_n \in [X^{+}] \mid [u]_{n} \neq \overline{[asb]}_{m}$ for any normal $S$-diword $[asb]_{m}$ \} is a $K$-basis of $D \langle X | S \rangle = D \langle X \rangle / Id(S) $.
\end{itemize}
\end{theorem}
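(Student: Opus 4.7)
The plan is to prove the equivalence by establishing the cycle (i)$\Rightarrow$(ii)$\Rightarrow$(iii)$\Rightarrow$(i), mirroring the classical strategy for Composition--Diamond lemmas but with extra bookkeeping for the center position $m$. The deg-lex-center ordering $>_d$ on $[X^+]_\omega$ is a well-ordering (it refines the deg-lex ordering on $X^+$ lexicographically by the center index), so descent arguments are available throughout. For (i)$\Rightarrow$(ii): take $f\in Id(S)$ and write $f=\sum_{i}\alpha_i[a_is_ib_i]_{m_i}$ as a $K$-linear combination of $S$-diwords; set $[w]_M:=\max_i\overline{[a_is_ib_i]_{m_i}}$ under $>_d$. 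Among all such representations, choose one that first minimizes $[w]_M$ and then minimizes the number of indices $i$ attaining this maximum. If $\bar f=[w]_M$, a single normal $S$-diword at the maximum yields the required expression. Otherwise $[w]_M>_d\bar f$, so at least two summands with leading monomial $[w]_M$ must cancel; I split on whether the offending $S$-diwords are normal. Non-normal ones are absorbed into expressions with strictly smaller leading monomial by invoking triviality modulo $S$ of the compositions of left and right multiplication. Once the cancelling pair is in normal form, it realises precisely one of the configurations (inclusion, intersection, or their left/right multiplicative variants) in the definition of composition, and the corresponding triviality hypothesis rewrites the pair as a sum of $S$-diwords whose leading monomials are $<_d[w]_M$, contradicting minimality.

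For (ii)$\Rightarrow$(iii), the two halves are standard. \emph{Spanning.} If $\bar f\notin Irr(S)$ for some $f\in Di\langle X\rangle$, then (ii) supplies a normal $S$-diword $[asb]_m$ with $\overline{[asb]_m}=\bar f$, and subtracting $lc(f)[asb]_m$ strictly lowers $\bar f$ under $>_d$; well-foundedness terminates the reduction, expressing $f$ modulo $Id(S)$ as a $K$-combination of elements of $Irr(S)$. \emph{Independence.} A nonzero relation $\sum_j\beta_j[u_j]_{n_j}\in Id(S)$ with $[u_j]_{n_j}\in Irr(S)$ would, by (ii), have its $>_d$-leading monomial coincide with some $\overline{[asb]_m}$ for a normal $S$-diword, contradicting the irreducibility of the leading $[u_{j_0}]_{n_{j_0}}$. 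For (iii)$\Rightarrow$(i): each composition $h$ between monic elements of $S$ lies in $Id(S)$ and, by construction, $\bar h$ is strictly below the common leading monomial used in its assembly. Apply the reduction from the previous paragraph to rewrite $h$ as a $K$-combination of elements of $Irr(S)$; since $h\in Id(S)$ and (iii) holds, this combination is zero, so $h=\sum_i\alpha_i[a_is_ib_i]_{m_i}$ with every $\overline{[a_is_ib_i]_{m_i}}\le\bar h$, which is exactly the triviality condition $h\equiv 0\pmod S$. Setting this up as transfinite induction along $>_d$ on $\bar h$ ensures that each recursive rewriting only appeals to triviality at strictly smaller leading monomials.

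The main obstacle is the case analysis inside (i)$\Rightarrow$(ii): the definition of normal $S$-diword, the admissible-position set $P([asb])$, and the distinction between strong and non-strong polynomials interact subtly with the center position $p(\bar s)$ under the monomial-center ordering, and one must check that every possible pattern of cancellation at the maximal leading monomial $[w]_M$ is subsumed by exactly one of the seven composition types (inclusion, intersection, their left/right multiplicative variants, and the two multiplication compositions). Exhaustion of cases --- rather than manipulation within any single case --- is where the labour sits, and it is precisely the step that Zhang and Chen's monomial-center ordering is tailored to streamline, since it forces the center positions of overlapping leading monomials to interact compatibly with the lex refinement by $m$.
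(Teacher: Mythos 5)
The paper itself contains no proof of this theorem: it is imported verbatim from Zhang and Chen \cite{Z1}, so there is no in-paper argument to compare against. Your cycle (i)$\Rightarrow$(ii)$\Rightarrow$(iii)$\Rightarrow$(i) is the standard architecture for Composition--Diamond lemmas and is, in outline, the route of the cited source; your treatments of (ii)$\Rightarrow$(iii) and (iii)$\Rightarrow$(i) are correct as sketched.

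There is, however, a concrete gap in your plan for (i)$\Rightarrow$(ii). When two normal $S$-diwords $[a_1s_1b_1]_{m_1}$ and $[a_2s_2b_2]_{m_2}$ share the maximal leading monomial $[w]_M$, the occurrences of $\tilde s_1$ and $\tilde s_2$ inside $w$ need not overlap or be nested: they can be disjoint, i.e.\ $w=a_1\tilde s_1 c\,\tilde s_2 b_2$ for some $c\in X^{\ast}$. This configuration is \emph{not} covered by any of the composition types, so your claim that every cancellation pattern at $[w]_M$ ``is subsumed by exactly one of the seven composition types'' is false as stated. The disjoint case is handled not by a triviality hypothesis but by the direct identity (written here for the associative skeleton, suppressing the center index)
\[
a_1s_1c\tilde s_2b_2-a_1\tilde s_1cs_2b_2
=-a_1s_1c(s_2-\tilde s_2)b_2+a_1(s_1-\tilde s_1)cs_2b_2 ,
\]
after which the right-hand side must be re-expressed through \emph{normal} $S$-diwords of leading monomial below $[w]_M$ --- and it is precisely here that the strong/non-strong dichotomy and the set $P([asb])$ re-enter, because the terms produced need not be normal and the admissible center positions must be tracked. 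Relatedly, your statement that non-normal $S$-diwords are ``absorbed into expressions with strictly smaller leading monomial'' overstates what the multiplication compositions deliver: the correct auxiliary lemma is that every $S$-diword is a $K$-combination of normal $S$-diwords whose leading monomials are bounded above by (not necessarily strictly below) the natural bound, the descent in the minimal-counterexample argument coming instead from reducing the number of summands attaining $[w]_M$. Neither issue is fatal --- both are repaired by standard arguments --- but as written your case analysis is not exhaustive.
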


\section{HNN-extension of dialgebras}

\begin{definition}\label{derivation}
For a dialgebra $D$, a derivation is a map $d \colon D \rightarrow D$, which is  linear and  satisfies:
$
d(x \lp y)= d(x) \lp y + x \lp d(y)$ and $   d(x\rp y)= d(x) \rp y + x \rp d(y),
$
for all $x,y \in D$. 
\end{definition}

Let $ D$ be a dialgebra and $A$ be a subalgebra of $D$. Let $d \colon A \to D$ be a derivation defined on the subalgebra $A$. Then the corresponding HNN-extension is defined as 
\begin{equation}\label{HNN-extension}
   D_{d}^{\ast}= \langle D,t \mid a \dashv t - t \vdash a =d(a), ~ a \in A \rangle. 
\end{equation}
Here $t$ is a new symbol not belonging to $D$. 
Let assume that $X^{\prime} = X \cup \{t \}$, where $X$ is a well-ordered basis of $D$ and $t< X$. Let also denote by $Y$ the basis of $A$. We consider the following polynomials
\begin{itemize}
    \item $f(x,y)=[xy]_1 - \sum_{v} \alpha_{xy}^{v} v$ 
    \item $g(x,y)=[xy]_2 - \sum _{v} \beta_{xy}^{v} v$
    \item $h_{z}=[zt]_{1} - [tz]_{2} - \sum_{v} \delta _{z}^{v} v,$
\end{itemize}
where $v$ is an arbitrary element in $X$,
and $x, y\in X$ such that $x>y$ and $z\in Y$. Let us consider $D \langle X^{\prime} \mid S \rangle$ as a presentation of HNN-extension $D_{d}^{\ast}$ (\ref{HNN-extension}) through structural constants, where $S=\{f,g,h_{z}\}$.
We consider $x\dashv y=\sum_v\alpha_{xy}^vv$, for some $\alpha_{xy}^v\in \mathbb{K}$.
Similarly, we have $x\vdash y=\sum_v\beta_{xy}^vv$, for some scalars $\beta_{xy}^v$. Note that these scalars satisfy some relations according to the associativity laws; i.e.  we have
\begin{equation}\label{eq1}
    \sum_{v, u} \beta_{yz}^v \alpha_{xv}^u = \sum_{v, u} \alpha_{yz}^v \alpha_{xv}^u,
\end{equation}
\begin{equation}\label{eq2}
   \sum_{v, u} \alpha_{xy}^v \beta_{vz}^u = \sum_{v, u} \beta_{yz}^v \beta_{xv}^u, 
\end{equation}
\begin{equation}\label{eq3}
  \sum_{v, u} \alpha_{yz}^v \beta_{xv}^u = \sum_{v, u} \beta_{xy}^u \alpha_{yz}^v.
\end{equation}

Note  also that, since $A$ is a subalgebra, so for $x, y\in Y$ and $v\in X\setminus Y$, we have $\alpha_{xy}^v=\beta_{xy}^v=0$. Consider the derivation $d\colon A\to D$. For any $x\in Y$, there are scalars $\delta_x^v$ such that
\[
d(x)=\sum_{v}\delta_x^vv,
\]
and the Definition \ref{derivation} implies that  
\begin{equation}\label{derivationrelation}
    d(\sum \alpha_{xy}^{v} v)=\sum \delta_{x}^{v} v \dashv y + \sum \delta_{y}^{v} x \dashv v.
\end{equation}

\begin{proposition} Let $X = \{x_i | i \in I\}$ be a well-ordered set and $D_{d}^{\ast}\langle X \mid S\rangle $ be the presentation of HNN-extension of dialgebra $D$, where $S=\{f(x,y),g(x,y),h_z \}$ are the polynomials through structural constants. Then
\begin{itemize}
    \item [(i)] The relations $S=\{f,g,h \}$ form a Groebner-Shirshov basis for HNN-extension of dialgebras with respect to deg-lex-center ordering.
    \item[(ii)] The set 
    \begin{align*}
        &\{[x_{i_{1}} \dots x_{i_{n}}]_{1} \mid x_{i_{1}} \leq \dots \leq x_{i_{n-1}},~ x_{i_{l}}\in X, 1\leq l\leq n, ~n \in \mathbb{Z}^{+}\}\\
        & \cup \{ [x_{j_{1}} \dots x_{j_{m}}]_{m} \mid x_{j_{1}} \leq \dots \leq x_{j_{m}},~ x_{j_{k}}\in X, 1\leq k \leq m, ~m \in \mathbb{Z}^{+} \}\\
        &\cup \{[tx_{j_{1}} \dots x_{j_{m}}]_{1} \mid~ x_{j_{1}} \leq \dots \leq x_{j_{m}}, x_{j_{k}}\in X \} \\ &\cup \{[tx_{j_{1}} \dots x_{j_{m}}]_{m+1} \mid~ x_{j_{1}} \leq \dots \leq x_{j_{m}}, x_{j_{k}}\in X \}
    \end{align*} is a normal form of the elements $D_{d}^{\ast}\langle X \mid S\rangle $.
\end{itemize}

\end{proposition}

\begin{proof}
(i)   
We compute all possible compositions between elements of $S$. Let us denote by, for example, $ f \wedge g$  the composition of the polynomials $f$ and $g$.  We note that $f$, $g$ and $h$ are strong polynomial.
Let assume that $x>y>z$. We firstly check the intersection compositions. We put $f_1=[xy]_1 - \sum _{v} \alpha_{xy}^{v} v $ and $f_2= [yz]_1 - \sum _{v} \alpha_{yz}^{v} v,$ then for the intersection composition  $f_1 \wedge f_2$ we  have  $w=xyz$ and $P[f_{1}z] \cap P[xf_{2}]=\{1\}$. Therefore,
\begin{align*}
         (f_1,f_2)_{[xyz]_1}&= [xyz]_{1} -\sum _{v} \alpha_{xy}^{v} v\dashv z 
         -[xyz]_{1} + \sum _{v} \alpha_{yz}^{v} x \dashv v  \\
         &=\sum _{v} \alpha_{yz}^{v} x \dashv v -\sum _{v} \alpha_{xy}^{v} v\dashv z \\
         &= \sum_{u} \alpha_{yz}^{v} \alpha_{xv}^{u} u - \sum_{u} \alpha_{xy}^{v} \alpha_{vz}^{u} u \\
         &= \sum_{v, u} \beta_{yz}^v \alpha_{xv}^u u - \sum_{u} \alpha_{xy}^{v} \alpha_{vz}^{u} u =0.
\end{align*}
Also, we put $g_1=[xy]_2 - \sum _{v} \beta_{xy}^{v} v $ and $g_2= [yz]_2 - \sum _{v} \beta_{yz}^{v} v,$ then for the intersection composition  $g_1 \wedge g_2$ we  have  $w=xyz$ and $P[g_{1}z] \cap P[xg_{2}]=\{3\}$. Therefore,
\begin{align*}
         (g_1,g_2)_{[xyz]_3}&= [xyz]_{3} -\sum _{v} \beta_{xy}^{v} v\vdash z 
         -[xyz]_{3} + \sum _{v} \beta_{yz}^{v} x \vdash v  \\
         &=\sum _{v} \beta_{yz}^{v} x \vdash v -\sum _{v} \beta_{xy}^{v} v\vdash z \\
         &= \sum_{u} \beta_{yz}^{v} \beta_{xv}^{u} u - \sum_{u} \beta_{xy}^{v} \beta_{vz}^{u} u \\
         &= \sum_{v, u} \alpha_{xy}^v \beta_{vz}^u u - \sum_{u} \beta_{xy}^{v} \beta_{vz}^{u} u =0.
\end{align*}
Now we compute intersection composition of $f$ and $g$. Let us consider $f=[xy]_1 - \sum _{v} \alpha_{xy}^{v} v $ and $g=[yz]_2 -\sum_{v} \beta_{yz}^v v$, then we have $w=xyz$ and $P[g_{1}z] \cap P[xg_{2}]=\{1,3\}$. Therefore, 
\begin{align*}
         (f,g)_{[xyz]_1}&= [xyz]_{1} -\sum _{v} \alpha_{xy}^{v} v\dashv z 
         -[xyz]_{1} + \sum _{v} \beta_{yz}^{v} x \dashv v  \\
         &=\sum _{v} \beta_{yz}^{v} x \dashv v -\sum _{v} \alpha_{xy}^{v} v\dashv z \\
         &= \sum_{u} \beta_{yz}^{v} \alpha_{xv}^{u} u - \sum_{u} \alpha_{xy}^{v} \alpha_{vz}^{u} u \\
         &= \sum_{v, u} \alpha_{yz}^v \alpha_{xv}^u - \sum_{u} \alpha_{xy}^{v} \alpha_{vz}^{u} u  =0.
\end{align*}
The intersection composition $(f,g)_{[xyz]_3}$ is calculated similarly and it is also trivial modulo $S$. There is no intersection composition between $g$ and $f$. Let check the intersection composition of $f \wedge h$. We put $f=[xy]_1 - \sum _{v} \alpha_{xy}^{v} v $ and $h=[yt]_1 - [ty]_{2} - \sum_{v} \delta _{x}^{v} v, $ so for the intersection composition  $f \wedge h$ we  have  $w=xyt$ and $P[ft] \cap P[xh]=\{1\}$. Therefore,
\begin{align*}
         (f,h)_{[xyt]_1}&= [xyt]_{1}-\sum _{v} \alpha_{xy}^{v} v\dashv t -[xyt]_1  + [xty]_{1} + \sum _{v} \delta_{y}^{v} x \dashv v\\
         &=[xty]_{1} - \sum _{v} \alpha_{xy}^{v} v\dashv t + \sum _{v} \delta_{y}^{v} x \dashv v\\
         &=([xt]_{1} - [tx]_{2} -\sum_{v} \delta_{x}^{v} v) \dashv y + [tx]_{2} \dashv y \\&+ \sum_{v} \delta_{x}^{v} v \dashv y - \sum _{v} \alpha_{xy}^{v} v\dashv t + \sum _{v} \delta_{y}^{v} x \dashv v \\
         &= ([xt]_{1} - [tx]_{2} -\sum_{v} \delta_{x}^{v} v) \dashv y \quad \text{(by relation \ref{derivationrelation})}\\
         &=h_x \dashv y.
\end{align*}
We have $\overline{(f,h)_{[xyt]_1}}=[xt]_{1}\dashv y < [w]=[xyt]_{1}$. This shows that $(f, h)_{[xyt]_1}$ is trivial modulo $S$. There is no intersection composition $g \wedge h$ and $h_x \wedge h_y$. Therefore, $S=\{f,g,h\}$ is a Groebner-Shirshov basis for $D_{d}^{\ast}$.

(ii) This part follows from Theorem \ref{CDlemma}.
\end{proof}

\end{document}